\newtheorem{theorem}{Theorem}[section]
\newtheorem{lemma}[theorem]{Lemma}
\newtheorem{observation}[theorem]{Observation}
\newtheorem{problem}[theorem]{Problem}
\newtheorem{example}[theorem]{Example}
\newtheorem{corollary}[theorem]{Corollary}
\theoremstyle{definition}
\newtheorem{definition}[theorem]{Definition}
\theoremstyle{remark}
\newtheorem{remark}[theorem]{Remark}
\newcommand\remove[1]{}
\def\cay{\hskip0.02cm{\rm Cay}\hskip0.01cm}
\def\N{{\mathbf{N}}}
\def\f2{\mathbb{F}_2}
\begin{document}

\title{\LARGE Metric dimensions of minor excluded graphs and minor exclusion in groups}

\author{Mikhail I.~Ostrovskii and David Rosenthal\footnote{The first-named author was supported by NSF
DMS-1201269. The second-named author was supported by the Simons
Foundation \#229577. The authors are very thankful to Florent
Baudier, Genady Grabarnik, Volodymyr Nekrashevych, Henrik Rueping and Andreas
Thom for useful discussions.}
\\
\\
Department of Mathematics and Computer Science\\
St. John's University\\
8000 Utopia Parkway\\
Queens, NY 11439\\
USA\\
Fax: (718) 990-1650\\
e-mails: {\tt ostrovsm@stjohns.edu}, {\tt rosenthd@stjohns.edu}}

\date{\today}
\maketitle

%\begin{large}

%\tableofcontents

\noindent{\bf Abstract.} An infinite graph $\Gamma$ is
minor excluded if there is a finite graph that is not a minor of $\Gamma$.  We
prove that minor excluded graphs have finite Assouad-Nagata
dimension and study minor exclusion for Cayley graphs of finitely
generated groups. Our main results and observations are: (1) minor
exclusion is not a group property: it depends on the choice of
generating set; (2) a group with one end has a generating set for
which the Cayley graph is not minor excluded; (3) there are groups that are not minor excluded for any set of generators, like $\mathbb{Z}^3$; (4) minor exclusion is preserved under free products; and (5) virtually free groups are minor
excluded for any choice of finite generating set.
\medskip

\noindent{\bf Keywords:} Assouad-Nagata dimension, Cayley graph,
ends of a group, free product, graph minor.
\medskip

\noindent{\bf 2010 Mathematics Subject Classification.} Primary:
20F65; Secondary: 05C63, 05C83, 46B85.
\medskip

\section{Introduction}
%An infinite graph $\Gamma$ is {\it connected} if there is a (finite) path between any two vertices of $\Gamma$.
A finite graph $M$ is a {\it
minor} of a connected graph $\Gamma$ if there is a finite set
$\{V_i\}$ of pairwise-disjoint finite connected subgraphs of
$\Gamma$ (called {\it branch sets}) such that the set $\{V_i\}$ is
in one-to-one correspondence with the set of vertices $\{v_i\}$ of
$M$, and for every edge in $M$ between vertices $v_i$ and $v_j$ in
$M$, there is an edge in $\Gamma$ between the corresponding branch
sets $V_i$ and $V_j$. The graph $\Gamma$ is {\it minor excluded}
if there is a finite graph that is not a minor of $\Gamma$. Since
every finite graph is a subgraph of some complete graph, $\Gamma$
is minor excluded if and only if there exists a natural number $m$
such that the complete graph $K_m$ on $m$ vertices is not a minor
of $\Gamma$. Minor exclusion of groups is about minor exclusion of
Cayley graphs.

Minor exclusion plays an important role in graph theory. The
well-known Kuratowski Theorem states that a finite graph is planar
if and only if the complete graph on five vertices, $K_5$, and the
complete bipartite graph on six vertices, $K_{3,3}$, are excluded
as minors. If one defines an infinite graph to be planar
provided there is an embedding of the graph into $\mathbb{R}^2$, then
Kuratowski's Theorem generalizes to infinite graphs as well
\cite{DS54} (the theory of planarity of infinite graphs is
somewhat different if the accumulation points of vertices are not
allowed; see \cite{Geo14} and references therein). A finitely
generated group $G$ is called planar if there exists a finite
symmetric generating set $S$ of $G$ for which $\cay(G,S)$ is
planar. There is extensive literature on planar groups
(see~\cite{Dro06,DSS98,Geo14,Lev82,Ren05,ZVC80}), and, since
planar graphs can be described in terms of minor exclusion, this
study is a part of the considered subject matter. As for the study
of minor exclusion for groups in general, we found only one other
paper~\cite{AC04}.

Embedding groups into Banach spaces is a very useful
tool for studying groups with respect to applications in topology, most notably to the Novikov Conjecture (see, for example,~\cite{NY12}). It is known from the works \cite{KPR93} and \cite{Rao99} that problems about embeddability of graphs into Banach spaces are closely related to the theory of minors in graphs.
Related to embedding questions is the study of
various large-scale notions of dimension for groups, such as {\it asymptotic dimension} and {\it Assouad-Nagata dimension} (see
\cite{Gro93}, where many of such notions are originated, and
\cite{BS07}).

Our first goal in this paper is to show that if an infinite graph $\Gamma$ is minor excluded, then $\Gamma$ has finite Assouad-Nagata dimension~(Theorem \ref{T:MinExclANDim}). This also implies that the asymptotic dimension of an infinite minor excluded graph is finite. After that we study the notion of minor exclusion for groups. Our main results and observations on minor exclusion for groups are:

\begin{itemize}

\item If $G$ is a finitely generated group with one end, then there is
a generating set $S$ in $G$ such that $\cay(G,S)$ is not minor
excluded (Theorem \ref{T:OneEndInfInd}). Since $\mathbb{Z}^2$
has one end and its standard Cayley graph is planar (and thus
minor excluded), this result implies that the minor exclusion of
$\cay(G,S)$ is not a group property, in the sense that it depends
on the generating set $S$. Actually, the fact that minor exclusion
is not a group property is much simpler than Theorem~\ref{T:OneEndInfInd} (see Example \ref{E:NotMEinZ2}).

\item There is a large class of groups whose Cayley graphs are not
minor excluded for any choice of a generating set (Section
\ref{S:Never}). This class includes all groups containing
$\mathbb{Z}^3$ as a subgroup. Theorem
\ref{T:MinExclANDim} also has a corollary of this type (see Remark
\ref{R:CombWthNS}).

\item A virtually free group is minor excluded for any choice of
generating set (Theorem \ref{T:Free}).

\item Minor exclusion is preserved under free products (Theorem
\ref{T:FreeProd}). This result generalizes a result of
Arzhantseva and Cherix that planarity is preserved under free
products.

\end{itemize}

\section{Minor exclusion and metric dimensions}
An infinite graph $\Gamma$ is {\it
connected} if there is a (finite) path between any two vertices of
$\Gamma$. A finite graph $M$ is a {\it minor} of a connected graph
$\Gamma$ if there is a finite set $\{V_i\}$ of pairwise-disjoint
finite connected subgraphs of $\Gamma$ (called {\it branch sets})
such that the set $\{V_i\}$ is in one-to-one correspondence with
the set of vertices $\{v_i\}$ of $M$, and for every edge in $M$
between vertices $v_i$ and $v_j$ in $M$, there is an edge in
$\Gamma$ between the corresponding branch sets $V_i$ and $V_j$.
The graph $\Gamma$ is {\it minor excluded} if
and only if there exists a natural number $m$ such that the
complete graph $K_m$ on $m$ vertices is not a minor of $\Gamma$.
(Our graph theory terminology and notation mostly follows
\cite{Die00}.)

\begin{definition}[\cite{Ass82, LS05}]
Let $X$ be a metric space and $d\in\N$. The {\it Assouad-Nagata dimension} (or just {\it Nagata dimension}) of $X$ is at most $d$ if there exists a $\gamma\in(0,\infty)$
such that for every $s>0$ there exists a cover $\mathcal{U}$ of $X$ with $s$-multiplicity at most $d+1$ (i.e., each closed ball of radius $s$ in $X$ intersects at most $d+1$ elements of $\mathcal{U}$), whose elements each have diameter at most $\gamma \cdot s$.
\end{definition}

\begin{theorem}\label{T:MinExclANDim}
If $\Gamma$ is a connected graph with finite degrees excluding the
complete graph $K_m$ as a minor, then $\Gamma$ has Assouad-Nagata
dimension at most $4^m-1$.
\end{theorem}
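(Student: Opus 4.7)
I would proceed by induction on $m$. The base case $m \leq 2$ is trivial: a $K_2$-minor-excluded graph has no edges, so it is a discrete metric space of Assouad-Nagata dimension $0 \leq 4^m - 1$.

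For the inductive step, let $\Gamma$ be a connected, locally finite graph excluding $K_m$ as a minor, and fix a scale $s > 0$. Choose a basepoint $v_0 \in V(\Gamma)$, pick an annulus width $t = Cs$ with a constant $C$ to be determined (roughly $C \geq 2$ so that closed $s$-balls meet at most two consecutive annuli), and set
\[
A_k = \{v \in V(\Gamma) : kt \leq d_\Gamma(v_0, v) < (k+1)t\}
\]
for $k = 0, 1, 2, \ldots$.

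The central structural step is the claim: \emph{for every connected component $C$ of the induced subgraph $\Gamma[A_k]$, the graph $\Gamma[C]$ excludes $K_{m-1}$ as a minor.} The heuristic is that the inner ball $B_{<kt} = \{v : d_\Gamma(v_0, v) < kt\}$ is connected (geodesics to $v_0$ stay inside it), and every vertex of $C$ lying on a geodesic from $v_0$ at distance exactly $kt$ has a neighbor in $B_{<kt}$; moreover, such geodesic-initial vertices always lie in $C$ because the geodesic from distance $kt$ onward stays inside $A_k$ and is path-connected within $A_k$. Hence a $K_{m-1}$-minor confined to $C$, augmented by the inner ball as an $m$-th branch set (after possibly enlarging each branch set along a geodesic to reach the inner boundary of the annulus, while keeping them pairwise disjoint), produces a $K_m$-minor of $\Gamma$, contradicting the hypothesis.

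Granted this claim, the inductive hypothesis supplies, for each component $C$, a cover $\mathcal{U}_C$ of the metric space $\Gamma[C]$ with $s$-multiplicity at most $4^{m-1}$ and piece diameters at most $\gamma_{m-1}\,s$. Define $\mathcal{U} = \bigcup_{k} \bigcup_{C} \mathcal{U}_C$. The diameter bound is inherited with $\gamma_m = \max(\gamma_{m-1}, 2t/s)$. For multiplicity, split $\mathcal{U}$ by parity of $k$: each closed $s$-ball of $\Gamma$ meets annuli of only one parity if $t$ is large enough, so it lies inside the even-indexed or the odd-indexed super-union; this standard parity trick contributes a factor of $2$. Within a single-parity layer, the ball meets at most $2$ components (those in the two consecutive annuli it spans), contributing another factor of $2$. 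The product $2 \cdot 2 \cdot 4^{m-1} = 4^m$ gives exactly the required multiplicity bound.

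The main obstacle will be the structural claim that components of annuli are $K_{m-1}$-minor-free, and in particular handling components of $A_k$ that do not reach the inner ball $B_{<kt}$ (components ``hanging'' only from the outer side). One likely fix is a symmetric argument using an outer region instead, or a refinement of the partition driven by a spanning BFS tree rooted at $v_0$, so that each annular ``piece'' has a canonical entry from below and the minor-extension argument applies uniformly. A secondary subtlety is pinning down the constant $4^m$ exactly, which forces careful tuning of $C$ so that no closed $s$-ball spans three annuli or touches three components simultaneously.
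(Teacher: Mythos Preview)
Your skeleton is essentially the recursive form of the Klein--Plotkin--Rao decomposition that the paper also uses, and your structural claim (components of annuli exclude $K_{m-1}$) is morally the content of \cite{KPR93}; the paper simply cites it as the bound $\diam(T)\le\alpha(m)R$ after $m$ rounds of cuts rather than reproving it.  Your worry about ``hanging'' components is unfounded: every vertex $v\in A_k$ lies on a geodesic to $v_0$, and the portion of that geodesic from distance $kt$ to $v$ stays in $A_k$, so every component of $A_k$ meets the inner boundary.

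The genuine gap is in your multiplicity count.  The assertion ``within a single-parity layer, the ball meets at most $2$ components'' is false, and no tuning of $C$ will fix it.  Two distinct components of a single annulus $\Gamma[A_k]$ can be at $\Gamma$-distance $\le 2s$ from each other via a short path that dips into $A_{k-1}$ or $A_{k+1}$; an $s$-ball can therefore meet arbitrarily many components of $A_k$.  This same phenomenon also breaks the transfer of the inductive multiplicity bound: the hypothesis gives you $s$-multiplicity $\le 4^{m-1}$ for the \emph{intrinsic} metric $d_C$ of each component, but since $d_\Gamma\le d_C$ the $\Gamma$-ball $B_\Gamma(x,s)\cap C$ can be strictly larger than $B_C(x,s)$, so it may hit more than $4^{m-1}$ elements of $\mathcal{U}_C$.

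The paper repairs exactly this.  Rather than letting whole clusters be cover elements, it takes only their ``cores'' $U_T$: vertices of a cluster $T$ that are at least $s+1$ away (in each successive intrinsic metric) from every cut used to form $T$.  One then proves by induction on the cut level that $B_\Gamma(x,s)$ can meet $U_T$ only if $x\in T$, which gives multiplicity $\le 4^m$ immediately since each $x$ lies in one cluster per partition.  The cores for a single choice of offsets do not cover, but running over all $4^m$ offset vectors $(\delta_1,\dots,\delta_m)\in\{0,R,2R,3R\}^m$ guarantees that every vertex lands in some core.  Your parity trick is the $m=1$ shadow of this offset argument; the missing idea is that the ``padding'' must be done in the intrinsic metrics at every level, not just the outermost one.
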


\begin{remark}\label{R:asdim}
An immediate corollary to Theorem~\ref{T:MinExclANDim} is that the {\it asymptotic dimension} of a minor excluded graph is finite. This is because, by definition, the asymptotic dimension of $X$ is at most $d$ if for every $s>0$ there exists a cover $\mathcal{U}$ of $X$ with $s$-multiplicity at most $d+1$, whose elements have uniformly bounded diameter. Thus, the asymptotic dimension of $X$ is bounded above by the Assouad-Nagata dimension of $X$.
\end{remark}

\begin{remark}\label{R:OnOst09} In \cite{Ost09} it was shown that minor excluded connected
infinite graphs with finite degrees of vertices admit coarse
embeddings into a Hilbert space. Combining Theorem~\ref{T:MinExclANDim} with the results of Naor and Silberman
\cite{NS11} we get the following strengthening of the result of
\cite{Ost09}: any snowflaking of a minor excluded connected graph
with finite degrees admits a bilipschitz embedding into a Hilbert
space ({\it snowflaking} means passing from the
metric space $(X,d)$ to the metric space $(X,d^\theta)$, where
$\theta\in(0,1)$).
\end{remark}

\begin{proof}[Proof of Theorem \ref{T:MinExclANDim}]  Observe that for graphs (considered as vertex sets
with the shortest path metric, where each edge has length $1$)
it suffices to consider $s\in\mathbb{N}$ (although
restricting $s$ to integers will cause us to increase $\gamma$). So
we need to prove that there exists $0<\gamma<\infty$ (which may
depend on $m$) such that for every $s\in\mathbb{N}$ there is a
cover of $\Gamma$ with sets of diameter at most $\gamma\cdot s$ and
$s$-multiplicity at most $4^m$. To find such a cover we construct $4^m$ ``partitions" of $\Gamma$, following
the approach of \cite{KPR93} with a slight modification.
Elements of the partitions will be defined as connected components of the graph
obtained from $\Gamma$ after removing $m$ sets of edges
$\{F_i\}_{i=1}^m$, constructed as follows.

Enumerate all of the vertices of $\Gamma$ as $\{v_1,v_2,\dots\}$
(it is clear that connected graphs with finite degrees are
countable), and let $R=s+3$. Let $\delta_1$ be one of the elements
of the set $\{0,R,2R,3R\}$ and define $F_1$ to be the set of all
edges that join vertices $u$ satisfying $d(u,v_1)=4Rj+\delta_1$,
for some $j\in\mathbb{N}$, to vertices $w$ satisfying
$d(w,v_1)=4Rj+\delta_1+1$ for the same $j$. Now delete $F_1$ from
the edge set of $\Gamma$. It is clear that unless $d(u,v_1)\le
4R+\delta_1$ for all $u\in V(\Gamma)$, we get a disconnected
graph. To construct $F_2$, pick $\delta_2\in \{0,R,2R,3R\}$
independently from the choice of $\delta_1$. In each component $C$
of $\Gamma\smallsetminus F_1$ choose a vertex $v_{i(C)}$ with
smallest index $i(C)$ over all vertices of $C$. Consider the set
of all edges that join vertices $u$ satisfying
$d_C(u,v_{i(C)})=4Rj+\delta_2$, for some $j\in\mathbb{N}$, to
vertices $w$ satisfying $d_C(w,v_{i(C)})=4Rj+\delta_2+1$, where
$d_C$ is the path-length metric (or shortest path distance) of
$C$. Define $F_2$ as the union of such edge sets over all
components $C$ of the graph $\Gamma\smallsetminus F_1$ obtained
from $\Gamma$ after the deletion of edges of $F_1$. To construct
$F_3$, pick $\delta_3\in \{0,R,2R,3R\}$ independently from the
choice of $\delta_1$ and $\delta_2$. Repeat the procedure from the
previous paragraph for each component of the graph obtained from
$\Gamma$ after the removal of $F_1\cup F_2$. Continue in the
obvious way, making a series of $m$ cuts of this type. We call the
vertex sets of the connected components of $\Gamma \smallsetminus
\cup_{i=1}^mF_i$ {\it clusters}; this set of clusters yields a
partition $\mathcal{P}$ of $\Gamma$. Construct such
partitions for all possible choices of $\delta_1,\dots,\delta_m$.
We get $4^m$ different partitions
$\{\mathcal{P}_p\}_{p=1}^{4^m}$ of $\Gamma$.

Define a cover $\mathcal{U}$ of $\Gamma$ as follows. For each
cluster $T$ in some partition $\mathcal{P}_p$, let $U_T$ be the
set of all vertices of $T$ satisfying the following conditions:
(1) they have distance at least $s+1$ from the ends of the edges
of $F_1$ in the original distance of $\Gamma$; and (2) for each
integer $k$, $2\leq k \leq m$, they have distance at least $s+1$
from the ends of edges of $F_k$ in the shortest-path metric
for the graph $\Gamma \smallsetminus\cup_{i=1}^{k-1}F_i$.

The set $\mathcal{U}=\{U_T\}$, over all clusters $T$ of all partitions
$\mathcal{P}_p$, must form a cover. To verify this, let $w$ be a vertex
in $\Gamma$. We choose $\delta_1,\dots,\delta_m$ so that $w$ will be contained in $U_T$, where $T$ is the
cluster of the partition corresponding to
$\delta_1,\dots,\delta_m$ that contains $w$. This can be done in the
following way: select $\delta_1\in\{0,R,2R,3R\}$ so
that $d_\Gamma(w,v_1)-\delta_1$ modulo $4R$ is between $R$ and $3R$. After
removing the corresponding $F_1$, the vertex $w$ lands in one of
the components, $C$, of the graph $\Gamma\smallsetminus F_1$. Let
$v_{i(C)}$ be the vertex of this component with smallest
index. Select $\delta_2\in\{0,R,2R,3R\}$ so that
$d_C(w,v_{i(C)})-\delta_2$ modulo $4R$ is between $R$ and $3R$. Continue in the obvious way. It is straightforward to check, since $R=s+3$,
that $w$ satisfies conditions (1) and (2) above for the obtained
$F_1,\dots,F_m$.

Klein, Plotkin, and Rao \cite{KPR93} proved that the assumption
that $\Gamma$ does not have $K_m$ as a minor implies that the diameter
of any cluster in any $\mathcal{P}_p$ is at most $\alpha(m) \cdot R$,
where $\alpha(m)\in(0,\infty)$ depends only on $m$. Therefore, every element of $\mathcal{U}$ will have diameter at most $\gamma \cdot s$ if we choose $\gamma=4\cdot \alpha(m)$.

It remains to show that the $s$-multiplicity of $\mathcal{U}$ is
at most $4^m$. Since each $x$ is inside at most $4^m$ different
clusters $T$ (one cluster from each partition), this would follow
from the claim that for a given element $U_T$ of the cover, the
ball $B(x,s)$ can intersect $U_T$ only if
$x\in T$. To prove this claim, it suffices to establish the
following statement by induction on $k$:

If $x$ is separated from $T$ by the edge cut $\cup_{i=1}^k F_i$,
but not by $\cup_{i=1}^{k-1} F_i$ (the latter set is assumed to be
empty for $k=1$), then $x$ has distance at least $s$ from $U_T$.

For $k=1$ this is obvious, by item (1) in the definition of $U_T$.
Let $k=2$. Assume that there is a path of length at most $s$
joining $x$ and $U_T$. If this path does not use any edges of
$F_1$, then the path-length distance in $\Gamma\smallsetminus F_1$
between $x$ and $U_T$ is also at most $s$. But this contradicts
item (2) in the definition of $U_T$.
%containing both $x$ and $U_T$, contrary to
If this path uses an edge of $F_1$, we get a
contradiction with what we proved for $k=1$. The inductive step is the same as in the proof of the case $k=2$.
\end{proof}

\begin{remark}\label{R:CombWthNS} See \cite{NY12} for the definition of ``compression". Remark \ref{R:OnOst09} shows that Theorem~\ref{T:MinExclANDim} in combination with the results of Naor and
Silberman \cite{NS11} implies that minor excluded groups have
compression $1$.  Therefore, the Cayley graph of a group that does
not have compression $1$ is not minor excluded with respect to an
arbitrary set of generators.
\end{remark}

\begin{remark} It is worth mentioning that
estimates from \cite{KPR93} for $\alpha(r)$ were improved in
\cite{FT03} (see also a presentation of results of \cite{FT03},
\cite{KPR93}, and \cite{Rao99} in \cite[Section 3.2]{Ost13}).
\end{remark}

%%%
\section{Minor exclusion for groups}
%\subsection{Minor exclusion is not a group property}\label{S:group-property}
Let $G$ be a finitely generated group, and let $S$ be a finite
generating set for $G$. Throughout this paper, we assume that $S$
does not contain the identity element of $G$ and that $S$ is {\it
symmetric}, i.e., $s\in S$ if and only if $s^{-1}\in S$. To study
minor exclusion of $G$, we will always use the {\it
right-invariant Cayley graph}, $\cay(G,S)$, i.e., the graph
with vertex set $G$ and edge set defined by the condition: $uv$ is
an edge between $u,v\in G$ if and only if $u=sv$ for some $s\in
S$. The group $G$ acts on the right of this graph.  (Our
references for group theory are \cite{Mei08} and \cite{Rob96}.)

When considering minor exclusion for groups, one quickly realizes, as Example~\ref{E:NotMEinZ2} shows, that minor exclusion is not a group property. That is, it depends not only on the group, but also on the choice of a generating set.

\begin{example}\label{E:NotMEinZ2} The Cayley graph $\cay(\mathbb{Z}^2, S_1)$,
 where $S_1=\{(\pm 1,0),~ (0,\pm1)\}$, is minor excluded by the well-known
Kuratowski theorem, since it is a planar graph. On the other hand,
the Cayley graph of $\mathbb{Z}^2$ with respect to the, slightly
bigger, set of generators $S_2=\{(\pm 1,0), (\pm2,0), (0,\pm1)\}$,
is not minor excluded.
\end{example}

\begin{proof} Let $m\in\mathbb{N}$ be given. Construct branch sets of a $K_m$-minor in $\cay(\mathbb{Z}^2, S_2)$ as
follows:
\[V_1=\big\{(2,1), \stackrel{(even,1)}{\dots}, (2m,1)\big\}\]
\[V_2=\big\{(3,1), (3,2), (4,2), \stackrel{(even,2)}{\dots}, (2m,2)\big\}\]
%\[V_3=\big\{(5,1),(5,2),(5,3),(6,3),\stackrel{(even,3)}{\dots}, (2m,3)\big\}\]
\[\vdots \]
\[V_k=\big\{(2k-1,1),(2k-1,2),(2k-1,3),\dots,(2k-1,k),(2k,k),\stackrel{(even,k)}{\dots}, (2m,k)\big\}\]
\[\vdots\]
\[V_m=\big\{(2m-1,1),\dots,(2m-1,m), (2m,m)\big\}.\]
It is straightforward to check that each $V_k$ is the vertex set of a connected subgraph of $\cay(\mathbb{Z}^2, S_2)$ and that any two of these vertex sets are joined by an edge.
\end{proof}

\subsection{Groups that are not minor excluded for any choice of generators}\label{S:Never}
It is interesting that if the group $\mathbb{Z}^2$ is increased
even ``slightly'', then we get a group that is not minor excluded
for any set of
generators. We mean the following result.

\begin{lemma}\label{L:Z2plusCyclic}
Let $C$ be a nontrivial cyclic group. Then $\mathbb{Z}^2 \times C$ is not minor excluded for any set of generators.
\end{lemma}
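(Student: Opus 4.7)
The plan is to construct, for each $m$, an explicit $K_m$ minor in $\cay(G,S)$, where $G = \mathbb{Z}^2 \times C$ and $S$ is an arbitrary finite symmetric generating set. The construction generalizes the one in the proof of Example~\ref{E:NotMEinZ2}: the extra direction provided by $C$ will supply the "shortcut" that was played there by the $(\pm 2, 0)$ generator. I will write $\pi : G \to \mathbb{Z}^2$ for the projection and assume for clarity that $|C| = n$ is finite; the case $C = \mathbb{Z}$ reduces to non-minor-exclusion of $\mathbb{Z}^3$, which the authors point to separately.

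The first step is to extract three structured elements from $S$. Since the finite symmetric set $\pi(S)$ generates $\mathbb{Z}^2$, we may pick $a, b \in S$ with $\pi(a), \pi(b)$ linearly independent; then $u := n\pi(a)$ and $v := n\pi(b)$ are linearly independent lattice vectors, and the powers $a^n, b^n$ lie in $H := \mathbb{Z}^2 \times \{0\}$ because their $C$-components $n\cdot c(a), n\cdot c(b)$ both vanish. Since $S$ generates $G$ and $C$ is nontrivial, some word $w$ in $S$ represents an element $(0, 0, c_0) \in G$ with $c_0 \neq 0$; let $\ell$ denote its length.

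Next, fix a spacing constant $M = M(n, \ell)$ to be taken large, and for $i = 1, \ldots, m$ define $V_i$ to be the union of three paths in $\cay(G, S)$: (a) a "horizontal arm" in $C$-layer $0$ obtained by iterating $a^n$ from the base point $(iMv, 0)$ and passing through the cross-points $(jMu + iMv, 0)$ for $j = 0, \ldots, m$; (b) a "vertical arm" in $C$-layer $c_0$ obtained by iterating $b^n$ from $(iMu, c_0)$ and passing through $(iMu + jMv, c_0)$; and (c) a $w$-connector linking the corner $(iMu + iMv, 0)$ to $(iMu + iMv, c_0)$. Each $V_i$ is connected as a union of three paths with shared endpoints. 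For the required edge between $V_i$ and $V_j$ when $i \neq j$, the horizontal arm of $V_i$ contains $(jMu + iMv, 0)$ and the vertical arm of $V_j$ contains $(jMu + iMv, c_0)$; these two vertices are joined by a $w$-path in $\cay(G, S)$, and by absorbing the $\ell - 1$ interior vertices of that $w$-path into $V_i$, the terminal step becomes a single $S$-edge from $V_i$ to $V_j$.

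The main obstacle will be verifying pairwise disjointness of the $V_i$'s. Each arm is really a "tube" whose interior also contains the intermediate vertices visited while iterating $a^n$ or $b^n$ (which cycle through all $C$-layers), and the $w$-connectors plus the absorbed $w$-paths add further short tubes around each cross-point $jMu + iMv$. A direct coordinate calculation exploiting the linear independence of $u$ and $v$ shows that for $M$ sufficiently large compared to $n$ and $\ell$, the horizontal tubes for distinct $i$ lie in disjoint $v$-strips of $\mathbb{Z}^2$, the vertical tubes in disjoint $u$-strips, and the cross-point tubes in disjoint neighborhoods of the lattice points $jMu + iMv$. Once this separation is in place, routine bookkeeping confirms the $V_i$ form a $K_m$ minor; since $m$ was arbitrary, $\cay(G, S)$ is not minor excluded.
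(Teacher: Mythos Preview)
Your plan has a genuine gap in the disjointness step, and it also misses a simplification that makes the paper's proof much shorter.

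\textbf{The gap.} You absorb the interior of the $w$-path for the pair $(i,j)$ into $V_i$, and your disjointness argument only checks that the resulting ``cross-point tube'' near $jMu+iMv$ is far from the \emph{other} cross-point tubes. But this tube sits in the same $u$-strip as the vertical arm of $V_j$, and nothing you said rules out a collision there. In fact such collisions occur. Take $G=\mathbb{Z}^2\times\mathbb{Z}_2$ with $S=\{\pm a,\pm b,\pm d\}$ where $a=(1,0,0)$, $b=(0,1,0)$, $d=(0,1,1)$. Then $n=2$, $c(b)=0$, and the shortest words representing $(0,0,1)$ have length $\ell=2$; one natural choice is $w=d\,b^{-1}$. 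With this $w$, the single interior vertex of the $w$-path starting at $(2jM,2iM,0)$ is $(2jM,\,2iM+1,\,1)$, and this is exactly the vertex at step $t=2iM+1$ on the vertical tube of $V_j$ (which consists of all $(2jM,t,1)$). So $V_i\cap V_j\neq\emptyset$ and the branch sets are not disjoint. The fix is not hard in this example (choosing $w=b^{-1}d$ instead avoids the collision), but in general you would need an argument for why $w$ can always be chosen, or the absorption truncated, so that the interior vertices stay off the $b$-line through $(jMu+iMv,c_0)$ in all $C$-layers simultaneously. That is more than ``routine bookkeeping.''

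\textbf{The simplification you missed.} The paper avoids tubes, large $M$, and the word $w$ entirely by observing that one can always find $s_1,s_2,s_3$ \emph{already in $S$} with $\langle s_1,s_2\rangle\cong\mathbb{Z}^2$ and $s_3\notin\langle s_1,s_2\rangle$. (If $\pi(s_1),\pi(s_2)$ are independent then $\langle s_1,s_2\rangle$ is torsion-free of rank~$2$; and it is proper in $G$, so some $s_3\in S$ lies outside it.) With these three single generators in hand, the $K_m$ branch sets are just L-shaped paths of the form
\[
V_k=\{ks_1+js_2+s_3: 0\le j\le k-1\}\cup\{ls_1+(k-1)s_2: k\le l\le m\},
\]
every step is a single $S$-edge, and disjointness follows in one line from $s_3\notin\langle s_1,s_2\rangle$. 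This also handles $C=\mathbb{Z}$ uniformly, so your deferral of that case is unnecessary (and would in any event be circular, since the paper's treatment of $\mathbb{Z}^3$ goes through this very lemma).
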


\begin{proof}
Let $S$ be a finite symmetric generating set for $\mathbb{Z}^2 \times C$. Since $\mathbb{Z}^2 \times C$ is abelian, there are distinct generators $s_1, s_2, s_3 \in S$ such that the subgroup $\langle s_1, s_2\rangle$ generated by $s_1$ and $s_2$ is isomorphic to $\mathbb{Z}^2$ and $s_3 \notin \langle s_1, s_2\rangle$.
%Let $X$ be the Cayley graph of $\langle s_1, s_2, s_3\rangle$ with respect to the generating set $\{ \pm s_1, \pm s_2, \pm s_3\}$. Note that $X$ is a connected subgraph of $\cay(\mathbb{Z}^2\times C, S)$. Therefore we can show that $\cay(\mathbb{Z}^2\times C, S)$ is not minor excluded by proving that $X$ is not minor excluded.
Let $m\in\mathbb{N}$ be given and construct branch sets of
$\cay(\mathbb{Z}^2\times C, S)$ as follows:
\[V_1=\big\{s_1, 2s_1, 3s_1, \dots, ms_1\big\}\]
\[V_2=\big\{2s_1+s_3, 2s_1+s_2+s_3, 2s_1+s_2, \dots, ms_1+s_2 \big\}\]
\[\vdots\]
\[V_k=\big\{ks_1+js_2+s_3 \;:\; 0\leq j \leq k-1 \big\} \cup \big\{ls_1+(k-1)s_2 \;:\; k \leq l \leq m \big\}\]
\[\vdots\]
\[V_m=\big\{ms_1+s_3, ms_1+s_2+s_3, \dots, ms_1+(m-1)s_2+s_3, ms_1+(m-1)s_2 \big\}.\]
It is straightforward to check that each $V_k$ is the vertex set of a connected subgraph of $\cay(\mathbb{Z}^2\times C, S)$ and that any two of these vertex sets are joined by an edge. Therefore, we have constructed a $K_m$ minor in $\cay(\mathbb{Z}^2\times C, S)$ for every $m\in\mathbb{N}$; that is, $\cay(\mathbb{Z}^2\times C, S)$ is not minor excluded.
\end{proof}

\begin{remark}
The group $\mathbb{Z}^2\times \mathbb{Z}_2$ is in a sense the
smallest group satisfying the assumptions of Lemma
\ref{L:Z2plusCyclic}. It is worth noting that when $C$ is a
finite group, $\mathbb{Z}^2\times C$ is quasi-isometric to
$\mathbb{Z}^2$ (see \cite[p.~138]{BH99} for the definition).
\end{remark}

The following is a generalization of Babai's result that every subgroup of a planar group is planar~\cite{Bab77}.

\begin{theorem}\label{T:subgroupsNever}
Let $G$ be a finitely generated group containing a finitely generated subgroup $H$. If $H$ is not minor excluded for any set of generators, then $G$ is not minor excluded for any set of generators.
\end{theorem}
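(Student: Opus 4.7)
The plan is to prove the theorem directly: fix an arbitrary finite symmetric generating set $S$ of $G$ and, for every $m$, construct a $K_m$-minor in $\cay(G,S)$; since $S$ was arbitrary, this will give the conclusion. Pick any finite symmetric generating set $T$ of $H$, set $N=\max_{t\in T}d_S(e,t)$, and for each $t\in T$ fix a word of length $n_t\le N$ in $S$ representing $t$. This assigns, for every $h\in H$ and $t\in T$, a canonical $S$-path $P(h,th)$ of length $n_t$ in $\cay(G,S)$ from $h$ to $th$, whose interior vertices are of the form $p\cdot h$ for $p$ ranging over a finite set $\Pi\subseteq G$ of partial products determined solely by $T$ and the chosen words. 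The hypothesis on $H$ guarantees that $\cay(H,T)$ contains $K_M$-minors for every $M$; let $U_1,\ldots,U_M\subseteq H$ be the branch sets of such a minor, with $M$ to be chosen much larger than $m$.

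For each $k$ pick a spanning tree $\tau_k$ of the induced subgraph $\cay(H,T)[U_k]$, and for each pair $k\ne l$ choose a single witness $T$-edge $(a_{k,l},b_{k,l})$ with $a_{k,l}\in U_k$ and $b_{k,l}\in U_l$. Fatten $U_k$ to a candidate set $V_k\subseteq V(\cay(G,S))$ by adjoining the interior vertices of $P(h,th)$ for every $(h,th)\in\tau_k$ together with the first half of $P(a_{k,l},b_{k,l})$ for each $l\ne k$. Each $V_k$ is connected in $\cay(G,S)$, and for every $k\ne l$ the midpoint of $P(a_{k,l},b_{k,l})$ provides an $S$-edge from $V_k$ to $V_l$; the only property of a $K_M$-minor still to be checked is that the $V_k$ are pairwise disjoint.

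The main obstacle is exactly this disjointness. An overlap $g\in V_k\cap V_l$ forces $g=p_1h_1=p_2h_2$ with $p_1,p_2\in\Pi$ and $h_1\in U_k$, $h_2\in U_l$, and hence $h_1h_2^{-1}\in\Delta:=H\cap\Pi^{-1}\Pi$, a \emph{fixed finite} subset of $H$. My plan is therefore to take $M\gg m$ and extract a sub-$K_m$-minor whose fattenings are pairwise disjoint, by combining two reductions: (i) replace each $U_k$ by a minimal Steiner-tree-type subset $U_k'\subseteq U_k$ connecting inside $\cay(H,T)[U_k]$ the chosen witness vertices $\{a_{k,l}\}_{l\ne k}$, so the individual branch sets are controlled in size; and (ii) run a greedy independent-set argument on the conflict graph on $\{1,\ldots,M\}$ in which $k\sim l$ whenever $U_k'$ and $U_l'$ contain two elements differing by an element of $\Delta$. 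The key local input is that any $g\in G$ lies in at most $|\Pi|$ fattenings, so local conflict multiplicity is bounded, and after the Steiner-tree trimming the conflict graph has maximum degree bounded by a function of $m$, $|T|$, and $|\Delta|$ only; an independent set of size $m$ then exists for $M$ large enough, and the corresponding $m$ branch sets $V_{k_1},\ldots,V_{k_m}$ form the required $K_m$-minor of $\cay(G,S)$. The hardest step is making the Steiner-tree refinement rigorous, since branch sets in a minor of a Cayley graph need not a priori be of bounded size; a Ramsey-type passage to a sub-minor whose witnesses inside each branch set cluster near a common center—exploiting the richness of $K_{M'}$-minors in $\cay(H,T)$ for $M'\gg M$—should supply the missing uniform control.
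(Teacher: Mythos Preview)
Your approach has a genuine gap at exactly the step you flag as hardest, and the obstacle is structural rather than a missing detail. The asserted degree bound on the conflict graph is not correct. After the Steiner-tree trimming, $U_k'$ must span the $M-1$ witness vertices $\{a_{k,l}\}_{l\ne k}$ inside $U_k$, so $|U_k'|\ge M-1$ whenever those witnesses are distinct, and in any case the tree may contain arbitrarily long internal paths. Hence the number of indices $l$ for which $U_l'$ meets $\Delta\cdot U_k'$ can be of order $|U_k'|\cdot|\Delta|$, which grows with $M$ and is certainly not bounded by a function of $m,|T|,|\Delta|$ alone; the greedy selection therefore cannot produce $m$ conflict-free indices. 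Your Ramsey repair---pass to a $K_{M'}$-minor with $M'\gg M$ and hope the witnesses inside each branch set cluster---has no footing: the ``location'' of $a_{k,l}$ within $U_k$ ranges over a set of size $|U_k|$, which is not bounded by anything in sight, so there is no finite palette on which to run a Ramsey argument. Nothing in the hypothesis guarantees $K_m$-minors with small branch sets, and in general none need exist.

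The idea you are missing is that $T$ should not be chosen independently of $S$; rather, let $S$ determine the generating set of $H$. Pick a connected subgraph $\Theta$ of $\cay(G,S)$ meeting each $H$-orbit in exactly one vertex (such a connected transversal exists because $\cay(G,S)$ is connected). The translates $\{\Theta\cdot h:h\in H\}$ then partition $V(\cay(G,S))$ into pairwise \emph{disjoint} connected pieces, and collapsing each piece to a point produces a Cayley graph $\cay(H,S')$ for some symmetric $S'\subseteq H$. Since $H$ is finitely generated, some finite $S''\subseteq S'$ already generates $H$; by hypothesis $\cay(H,S'')$ carries a $K_m$-minor, and because each of its vertices \emph{is} one of the disjoint connected subgraphs $\Theta\cdot h$ of $\cay(G,S)$, that minor lifts to $\cay(G,S)$ with no overlaps whatsoever. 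This is Babai's contraction construction, and it replaces your entire conflict analysis with a single structural observation.
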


\begin{proof}
Let $X=\cay(G, S)$, where $S$ is a finite symmetric generating set for $G$. We must show that there is a $K_m$-minor of $X$ for every $m\in \mathbb{N}$.

We begin by recalling Babai's construction of a Cayley graph $X'$ for the subgroup
$H$~\cite{Bab77}. There is a
connected subgraph $T$ of $X$ that contains precisely one vertex
for each $H$-orbit in $V(X)$. This means that if $h\in H
\smallsetminus \{e\}$, then $V(T\cdot h) \cap V(T)=\emptyset$ and
$\bigcup_{h \in H} V(T\cdot h)=V(X)$. Then $X'$ is obtained by
collapsing each $T\cdot h$. That is, the vertex set of $X'$ is
$V(X')=\{T\cdot h\,:\,h \in H \}$ and there is one edge in $X'$
between the vertices $T\cdot h$ and $T\cdot h'$ if there is an
edge in $X$ connecting the subsets $T\cdot h$ and $T\cdot h'$.
Thus, $X'$ is connected (since $X$ is connected), $H$ acts on
$X'$, and the $H$-action is free and transitive on $V(X')$.
Therefore, $X'$ is a Cayley graph for $H$.

The generating set for
$H$ corresponding to $X'$ is the set $S'$ of all $k \in H$ such
that there is an edge in $X'$ between $T$ and $T\cdot k$. Note
that $S'$ might be infinite. Since $H$ is finitely generated,
however, there must be a finite subset $S''\subset S'$ that
generates $H$. Under the identification of $X'$ with $\cay(H,
S')$, let $X''$ be the subgraph of $X'$ corresponding to the
subgraph $\cay(H, S'')\subset \cay(H, S')$.
By assumption, $H$ is not minor excluded with respect to $S''$. Thus, for every $m\in \mathbb{N}$, there is a $K_m$-minor of $X''$. It is clear from the construction that a $K_m$-minor of $X''$ yields a $K_m$-minor of $X$. In other words, $G$ is not minor excluded with respect to $S$.
\end{proof}

By Lemma~\ref{L:Z2plusCyclic}, we have the following corollary to Theorem~\ref{T:subgroupsNever}.

\begin{corollary}\label{C:ContZ2plusCyclic}
Let $G$ be a finitely generated group that contains $\mathbb{Z}^2 \times C$ as a subgroup, where $C$ is a nontrivial cyclic group. Then the Cayley graph of $G$ is not minor excluded for any set of generators.
\end{corollary}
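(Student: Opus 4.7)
The plan is to obtain this corollary as an immediate composition of the two results that precede it: Lemma \ref{L:Z2plusCyclic} and Theorem \ref{T:subgroupsNever}. Setting $H := \mathbb{Z}^2 \times C$, I would first verify that $H$ meets both hypotheses required to play the role of the distinguished subgroup in Theorem \ref{T:subgroupsNever}.

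Finite generation of $H$ is immediate: two standard generators of $\mathbb{Z}^2$ together with a single generator of the cyclic group $C$ give a finite symmetric generating set of $\mathbb{Z}^2 \times C$. The second hypothesis, namely that $H$ is not minor excluded for \emph{any} choice of finite symmetric generating set, is exactly the content of Lemma \ref{L:Z2plusCyclic}. With both conditions in hand, Theorem \ref{T:subgroupsNever} applied to $H \le G$ directly yields the desired conclusion: for every finite symmetric generating set $S$ of $G$, the Cayley graph $\cay(G,S)$ fails to exclude any $K_m$ as a minor.

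There is no substantive obstacle to overcome, because this corollary is stated in precisely the form that the combined machinery of the preceding lemma and theorem delivers. The only point worth flagging is conceptual rather than technical: one might worry that $K_m$-minors produced inside a Cayley graph of $H$ do not obviously embed into $\cay(G,S)$, since the induced subgraph of $\cay(G,S)$ on $H$ need not be a Cayley graph of $H$ at all. That gap, however, is closed \emph{internally} by Theorem \ref{T:subgroupsNever} through the Babai-style collapsing construction, which extracts a Cayley graph $X''$ of $H$ as a ``quotient-and-subgraph'' of $\cay(G,S)$ whose $K_m$-minors lift back to $K_m$-minors of $\cay(G,S)$. Thus nothing further needs to be done at the level of $G$, and the corollary follows in one line.
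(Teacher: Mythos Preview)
Your proposal is correct and matches the paper's approach exactly: the corollary is stated in the paper as an immediate consequence of Lemma~\ref{L:Z2plusCyclic} combined with Theorem~\ref{T:subgroupsNever}, with no additional argument given. Your verification that $H=\mathbb{Z}^2\times C$ is finitely generated and your remark about the Babai collapsing construction are accurate elaborations, but the paper itself supplies only the one-line deduction.
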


\subsection{Minor exclusion for groups containing finitely generated free subgroups of finite index}\label{S:VirtFree}

Next we consider groups that contain a finitely generated free group as a subgroup of finite index. Let $\mathbb{F}_n$ denote the free group on $n$ generators.

\begin{theorem}\label{T:Free} Let $G$ be a group containing $\mathbb{F}_n$ as a subgroup of finite index and
let $S$ be a finite symmetric generating set for $G$. Then
$\cay(G, S)$ is minor excluded.
\end{theorem}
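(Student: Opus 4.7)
My plan is to reduce to the case $G=\mathbb{F}_n$ via Babai's construction, then show that any Cayley graph of $\mathbb{F}_n$ with a finite generating set has bounded tree width by exhibiting it as a subgraph of a finite power of the standard Cayley tree, lift the tree-width bound back up to $\cay(G,S)$, and finish by invoking the fact that bounded tree width forces minor exclusion via the Helly property for subtrees.

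First I would apply Babai's construction exactly as in the proof of Theorem~\ref{T:subgroupsNever}, with $H=\mathbb{F}_n$. The finite-index hypothesis $k:=[G:\mathbb{F}_n]<\infty$ ensures that the connected subgraph $T$ picking one vertex from each $\mathbb{F}_n$-coset is finite, and therefore the generating set $S'$ that the collapse produces is also finite (bounded by $|V(T)|\cdot|S|$, since each vertex of $T$ has only $|S|$ incident edges in $X=\cay(G,S)$). Thus we obtain $X'=\cay(\mathbb{F}_n,S')$ with $S'$ finite.

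Second, I would show that $X'$ has bounded tree width. Let $T_0$ be the standard free generators and $\Theta=\cay(\mathbb{F}_n,T_0)$, which is a $2n$-regular tree. Setting $L=\max_{s\in S'}\ell_{T_0}(s)$, one sees that $X'$ is a spanning subgraph of the $L$-th power $\Theta^L$ (two vertices adjacent iff $d_\Theta\le L$). A natural tree decomposition of $\Theta^L$ indexes bags by the vertices of $\Theta$ itself, with $B_v=\{u\in V(\Theta):d_\Theta(u,v)\le L\}$; bag sizes are uniformly bounded by the (finite) size of a radius-$L$ ball in $\Theta$, and the axioms follow immediately from the tree structure of $\Theta$. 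I would then lift this decomposition back to $X=\cay(G,S)$ by setting $\widehat B_t:=\pi^{-1}(B_t)$, where $\pi$ is the Babai quotient map. Each fiber of $\pi$ has exactly $k$ vertices, so $|\widehat B_t|\le k|B_t|$; and the decomposition axioms are preserved because every edge of $X$ either lies inside a single fiber or else projects to an edge of $X'$, and connected subtrees in the new indexing are inherited from those in the old.

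Finally I would invoke the classical observation that tree width at most $w'$ forbids $K_{w'+2}$ as a minor: if connected disjoint branch sets $V_1,\dots,V_{w'+2}$ realized such a minor in $\cay(G,S)$, the subtrees $T_i=\{t:V_i\cap \widehat B_t\ne\emptyset\}$ of the index tree would be pairwise intersecting (since adjacent branch sets share a bag by the edge axiom), hence would have a common node $t_0$ by the Helly property for subtrees, and then $\widehat B_{t_0}$ would meet each $V_i$ in a distinct vertex, forcing $|\widehat B_{t_0}|\ge w'+2$. The main point to verify carefully is that the Babai construction really does yield a \emph{finite} generating set $S'$ for $\mathbb{F}_n$ (which is precisely where the finite-index hypothesis enters); after that step, everything else is a clean application of tree decompositions and the power-of-a-tree calculation.
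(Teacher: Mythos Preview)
Your argument is correct and genuinely different from the paper's. The paper never mentions tree width or tree decompositions; instead it works directly with the standard Cayley tree $\cay(\mathbb{F}_n,O)$ and shows (Lemma~\ref{L:BetwComp}) that for every edge $e$ of this tree, the number of edges of $\cay(G,S)$ crossing between the two ``coset slabs'' $\{g_1,\dots,g_k\}\times A(e)$ and $\{g_1,\dots,g_k\}\times B(e)$ is bounded by a constant $D$. A direct counting argument then shows that for $m>\max\{6nD,3k\}$ one can locate an edge $e$ where the product $k_A(e)\,k_B(e)$ of the numbers of branch sets lying entirely on each side exceeds $D$, contradicting the cut bound. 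Your route instead packages the same geometric fact---that $\cay(G,S)$ sits over the tree $\Theta$ with uniformly small fibres and short-range edges---into a bounded-width tree decomposition (bags $\pi^{-1}(B_L(v))$ indexed by $V(\Theta)$), and then invokes the Helly property of subtrees. Both arguments are really proving that $\cay(G,S)$ has bounded tree width; yours says so explicitly and modularly, while the paper's gives a hands-on edge-cut proof tailored to the situation. Your approach yields a clean bound $K_m$ is excluded once $m$ exceeds $k\cdot|B_\Theta(L)|+1$; the paper's bound $m\le\max\{6nD,3k\}$ is of comparable order but arrived at through an iterated search along the tree rather than a single global statement. The one step in your outline that deserves care is exactly the one you flag: finiteness of $S'$ in Babai's construction follows from $|V(T)|=k<\infty$, and the remaining verifications (edge axiom for the lifted bags, connectedness of $\{t:u\in\widehat B_t\}$) are straightforward as you indicate.
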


\begin{proof} Let $\{x_1,\dots,x_n\}\subset G$ be the basis of $\mathbb{F}_n$, and let
$\{g_1=1,g_2,\dots,g_k\}$ be representatives of the left cosets of
$\mathbb{F}_n$ in $G$. As a set, $G$ may be identified with the
Cartesian product $G=\{g_1,\dots,g_k\}\times \mathbb{F}_n$ since
each $g\in G$ can be uniquely represented as $g=g_if$, where
$i\in\{1,\dots,k\}$ and $f\in\mathbb{F}_n$. Let
$O=\{x_1,\dots,x_n,x_1^{-1},\dots,x_n^{-1}\}$ and let
$\cay(\mathbb{F}_n,O)$ be the corresponding Cayley graph; denote
the associated distance function for this graph by
$d_{\mathbb{F}_n}$. To prove the theorem, assume the contrary.
That is, assume that the graph $\cay(G,S)$ has a $K_m$-minor for
every $m\in\mathbb{N}$.

\begin{lemma}\label{L:BetwComp} Let $e$ be an edge in $\cay(\mathbb{F}_n,O)$ and let
$A(e)$ and $B(e)$ be the vertex sets of the connected components
of $\cay(\mathbb{F}_n,O)\smallsetminus \{e\}$. The number of edges in $\cay(G,S)$
connecting $\{g_1,\dots,g_k\}\times A(e)$ and
$\{g_1,\dots,g_k\}\times B(e)$ is finite and bounded from above
independently of the choice of $e$.
\end{lemma}

\begin{proof}
The lemma is an immediate consequence of the following claim. If $u,v\in G$ are adjacent in $\cay(G,S)$, and $u=g_{i(1)}f_1$ and $v=g_{i(2)}f_2$, where $i(1),i(2)\in\{1,\dots,k\}$ and $f_1,f_2\in\mathbb{F}_n$,
then $d_{\mathbb{F}_n}(f_1,f_2)\le M$, where $M\in\mathbb{N}$
depends on $G$, $S$, and the choice of $\{g_1,\dots,g_k\}$ (but
not on the choice of $u$ and $v$). To verify this claim, note that there is an $s\in S$ such that $g_{i(2)}f_2=sg_{i(1)}f_1$, since $u$ and $v$ are adjacent in $\cay(G,S)$. Also, for
each $i\in\{1,\dots,k\}$ and $s\in S$, we have
$sg_i=g_{j(i,s)}f(i,s)$, for some $j(i,s)\in\{1,\dots,k\}$ and
$f(i,s)\in \mathbb{F}_n$. This implies that $f_2=f(i(1),s)f_1$.
Therefore, $d_{\mathbb{F}_n}(f_1,f_2)\le\max_{s\in
S}\max_{i\in\{1,\dots,k\}}d_{\mathbb{F}_n}(f(i,s),1).$ This
maximum is over a finite set, so the claim follows.
\end{proof}

Let $D$
denote the upper bound obtained in Lemma~\ref{L:BetwComp}. Given a
$K_m$-minor in $\cay(G, S)$, the number of edges of $\cay(G, S)$
connecting $\{g_1,\dots,g_k\}\times A(e)$ and
$\{g_1,\dots,g_k\}\times B(e)$ must be at least $R(e)+k_A(e)\cdot
k_B(e)$, where $R(e)$ is the number of branch sets of the
$K_m$-minor intersecting both of the sets (such branch sets are
said to {\it cross} $e$), $k_A(e)$ is the number of branch sets
that are completely contained in $\{g_1,\dots,g_k\}\times A(e)$,
and $k_B(e)$ is the number of branch sets completely contained in
$\{g_1,\dots,g_k\}\times B(e)$. We will show that, if $m$ is large
enough, the inequality $D\ge R(e)+k_A(e)\cdot k_B(e)$ leads to a
contradiction for a suitably chosen edge $e$.

Choose $m>\max \{6nD,3k \}$. Then
the following two conditions are satisfied:
\begin{enumerate}
\item[(1)]
$\frac1{2n-1}\left(\frac{2m}3-\frac{m}{6n}-k\right)>\frac{m}{6n}$;
\item[(2)] $\frac{m}{6n}>D$.
\end{enumerate}
By condition~(2), for any edge $e$ in $\cay(\mathbb{F}_n,O)$ the number of branch sets, $R(e)$, that cross both
$\{g_1,\dots,g_k\}\times A(e)$ and $\{g_1,\dots,g_k\}\times B(e)$
is less than $\frac{m}{6n}$. Therefore, because of condition~(2), we can establish a contradiction with the inequality $D\ge R(e)+k_A(e)\cdot k_B(e)$ if we find an edge for which $k_A(e)\cdot k_B(e)\ge\frac{m}{6n}$. We do this by showing that there exists an edge for which both $k_A(e)$ and $k_B(e)$ are positive and one of them is greater than $\frac{m}{6n}$.

Pick an edge $e_0$ in $\cay(\mathbb{F}_n,O)$ and assume that
$k_A(e_0)\ge k_B(e_0)$. If $k_B(e_0)\ge \frac{m}{6n}$, then we are
done. So assume that $k_B(e_0)<\frac{m}{6n}$. Let $e_1,\dots,
e_{2n-1}$ be the edges that have a common vertex, $f$, with $e_0$
and are contained in $\{g_1,\dots,g_k\}\times A(e_0)$. Let
$A(e_1),\dots,A(e_{2n-1})$ denote the vertex sets of the
components of $\cay(\mathbb{F}_n,O)\smallsetminus
\{e_1\},\dots,\cay(\mathbb{F}_n,O)\smallsetminus \{e_{2n-1}\}$,
respectively, that do not contain $e_0$. As noted above,
$R(e_{i})<\frac{m}{6n}$ for every $i$. That is, each of
$e_0,e_1,\dots, e_{2n-1}$ is crossed by fewer than $\frac{m}{6n}$
branch sets. Thus, there are more than $\frac{2m}{3}$ branch sets
that do not cross any of the edges $e_0,e_1,\dots, e_{2n-1}$. On
the other hand, since there are at most $k$ branch sets in the set
$\{g_1,\dots,g_k\}\times \{f\}$, the number of branch sets that do
not cross any of the edges $e_0,e_1,\dots, e_{2n-1}$ is at most
$$\left(\sum_{i=1}^{2n-1} k_A(e_i)\right)+k_B(e_0)+k < \left(\sum_{i=1}^{2n-1} k_A(e_i)\right)+ \frac{m}{6n}+k.$$
Therefore, $\big(\sum_{i=1}^{2n-1} k_A(e_i)\big)+ \frac{m}{6n}+k > \frac{2m}{3}$.
Applying (1), this implies that
at least one of the sets $\{g_1,\dots,g_k\}\times A(e_1),\dots,
\{g_1,\dots,g_k\}\times A(e_{2n-1})$ contains more than
$\frac{m}{6n}$ branch sets. That is, $k_A(e_i)>\frac{m}{6n}$ for some $i$, $1\leq i \leq 2n-1$. If $k_B(e_i)>0$, then we are done. If $k_B(e_i)=0$, then $k_B(e_i)<\frac{m}{6n}$ and we can repeat the argument. But the argument cannot be repeated infinitely many times, since the branch sets are finite and there are only finitely many of them. Therefore, we eventually find an edge $e$ with $k_A(e)>\frac{m}{6n}$ and $k_B(e)>0$, as claimed.
\end{proof}

\subsection{Stability of minor exclusion with respect to free products}

Recall that a {\it cut-vertex} in a graph is a vertex whose
deletion increases the number of components. The following theorem is a generalization of \cite[Theorem 3(i)]{AC04}, which states that $\cay(G*H,S\cup T)$ is
planar if both $\cay(G,S)$ and $\cay(H,T)$ are planar.

\begin{theorem}\label{T:FreeProd} Let $G$ and $H$ be finitely generated groups with generating sets $S$ and $T$, respectively. Let $M$ be a finite connected graph
that does not contain cut-vertices. If $M$ is a minor of
$\cay(G*H,S\cup T)$, then $M$ is a minor of either $\cay(G,S)$ or
$\cay(H,T)$.
\end{theorem}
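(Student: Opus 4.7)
The plan is to exploit the sheet structure of $\Gamma = \cay(G*H, S\cup T)$. For each $w\in G*H$, the set $Gw=\{gw:g\in G\}$ spans a subgraph of $\Gamma$ isomorphic to $\cay(G,S)$, which I call a $G$-\emph{sheet}; similarly $Hw=\{hw:h\in H\}$ spans an $H$-sheet isomorphic to $\cay(H,T)$. Every vertex of $\Gamma$ lies in exactly one $G$-sheet and one $H$-sheet, and by the uniqueness of the alternating normal form in $G*H$, two sheets of the same type are either equal or disjoint while two sheets of opposite types share at most one vertex. Define the bipartite sheet graph $\mathcal{T}$ whose vertex set is the set of all sheets and whose edges join sheets sharing a common vertex; by Bass--Serre theory applied to the splitting $G*H$ (or by a direct normal-form argument), $\mathcal{T}$ is a tree.

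The first lemma is that every simple cycle $C$ in $\Gamma$ lies in a single sheet. Each edge of $C$ belongs to a unique sheet; whenever two consecutive edges of $C$ lie in different sheets, their common endpoint must be the unique shared vertex of those two sheets. Since $C$ is simple, these transition vertices are all distinct, and they correspond to distinct edges of $\mathcal{T}$, giving a closed walk in $\mathcal{T}$ whose edges are all distinct, which is impossible in a tree unless the walk is trivial. Hence all edges of $C$ lie in the same sheet. Because in a $2$-connected graph any two edges lie on a common cycle, it follows that every $2$-connected subgraph of $\Gamma$ is contained in a single sheet.

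Next I apply a standard cut-vertex reduction to the finite realization of the minor. Since $M$ is finite, $M$ is a minor of some finite connected subgraph $\widetilde\Gamma\subseteq\Gamma$ (the union of the branch sets and the realizing edges). If $v$ is a cut-vertex of $\widetilde\Gamma$, write $\widetilde\Gamma=X_1\cup X_2$ with $X_1\cap X_2=\{v\}$, so that no edge of $\widetilde\Gamma$ joins $X_1\setminus\{v\}$ to $X_2\setminus\{v\}$. Classify the branch sets by which of the two sides they meet: if both sides were occupied, then either $M$ would be disconnected (if no branch set contains $v$) or the vertex of $M$ corresponding to the unique branch set containing $v$ would be a cut-vertex of $M$, contradicting the hypothesis. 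Hence all branch sets lie on a single side and $M$ is a minor of $X_1$ or $X_2$. Iterating this reduction inside the finite graph $\widetilde\Gamma$ eventually exhibits $M$ as a minor of a $2$-connected subgraph $B\subseteq\Gamma$.

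By the sheet lemma, $B$ lies in some $G$- or $H$-sheet, and that sheet is isomorphic to $\cay(G,S)$ or $\cay(H,T)$; hence $M$ is a minor of one of these two Cayley graphs, completing the proof. The degenerate cases $|V(M)|\le 2$ are immediate. I expect the main technical point to be the clean verification that $\mathcal{T}$ is a tree, which I would handle via the Bass--Serre tree of the free product $G*H$ or via a direct induction on normal-form length; the cut-vertex reduction and the sheet lemma are then essentially standard.
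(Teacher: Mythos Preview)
Your proof is correct and takes a somewhat different, more structural route than the paper. The paper also exploits the tree-like coset structure of $G*H$, but encodes it as a path observation: any path between two reduced words must pass through specific ``prefix'' vertices determined by their normal forms. Combined with Menger's theorem applied directly to $M$, this forces two branch sets to have representatives $u_1,u_2$ in a common sheet; then, rather than running a block decomposition on the realising subgraph, the paper simply intersects every branch set $V_i$ with that sheet and verifies directly that the intersections $V_i\cap G$ remain connected and still realise $M$. Your approach instead isolates a clean lemma (every $2$-connected subgraph of $\Gamma$ lies in a single sheet) via the Bass--Serre tree, and then performs a cut-vertex reduction on the finite realising subgraph. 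The structural lemma is a pleasant reusable statement and separates the geometry of $\Gamma$ from the hypothesis on $M$ more sharply; the paper's version is more hands-on and avoids naming the Bass--Serre tree, though its path observation is essentially the same tree fact in disguise. One small point to tighten in your cut-vertex step: the branch set $V_k$ containing the cut-vertex $v$ may itself meet both $X_1$ and $X_2$, so after concluding that all \emph{other} branch sets lie in (say) $X_1$, you should replace $V_k$ by $V_k\cap X_1$ and note that this is still connected (any simple path in $V_k$ between two points of $X_1$ cannot enter $X_2\setminus\{v\}$ without revisiting $v$).
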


\begin{proof}[Proof of Theorem \ref{T:FreeProd}] Let $V_1,\dots,V_m$ be
branch sets of an $M$-minor in $\cay(G*H,S\cup T)$. Recall that each element of $G*H$ is represented by a unique reduced word and nontrivial reduced words are products of the form $x_n\cdots x_1$, where $x_n,\dots,x_1$ are non-identity
elements alternating between $G$ and $H$ and $x_1$ can be either in
$G$ or in $H$ (see~\cite[p.~169]{Rob96} if needed). Recall that
two vertices $u,v$ in $\cay(G*H,S\cup T)$ are adjacent if and only if $u=sv$ for some $s\in S \cup T$. This immediately implies the following observation.

\begin{observation}\label{O:PthsFrPr} Let $x_n\cdots x_1$
and $y_t\cdots y_1$ be reduced words in $\cay(G*H,S\cup T)$ and
let $x_{k-1}\cdots x_1=y_{k-1}\cdots y_1$ denote their largest
common word, which can be empty. Then each path connecting
$x_n\cdots x_1$ and $y_t\cdots y_1$ passes through $x_{k}\cdots
x_1$ if $n\geq k$, and through $y_{k}\cdots y_1$ if $t\geq k$.
%Furthermore, if $n>k$, any such path passes through the elements $x_{n-1}\cdots x_1$, $x_{n-2}\cdots x_1$, \dots, $x_k\cdots x_1$. A similar statement holds if $t>k$.
If $x_1\in G$ and $y_1\in H$, or vise versa, then the path passes through $1$.
\end{observation}

We shall use the notation $Hx_n\cdots x_1=\{x_{n+1}\cdot x_n\cdots
x_1\,:\, x_{n+1}\in H\}$, where $x_n,x_{n-2},\dots$ are fixed
elements of $G$ and $x_{n-1},x_{n-3},\dots$ are fixed elements of
$H$. Similarly, we define $Gy_m\cdots y_1$. Combining our assumptions with Menger's theorem~\cite[Section 3.3]{Die00}, it follows that
there exist two vertices $u_1\in V_1$ and $u_2\in V_2$ such that there
are two disjoint $u_1u_2$-paths in $\cay(G*H,S\cup T)$. Using
Observation~\ref{O:PthsFrPr}, we get that $u_1$ and $u_2$ are
either in the same set of the form $Gy_r\cdots y_1$, or in the same
set of the form $Hx_m\cdots x_1$. Without loss of generality, we can assume that $u_1$ and $u_2$ are in $Gy_r\cdots y_1$.
%We will show that this implies that $\cay(G,S)$ has an $M$-minor.
Since $\cay(G*H,S\cup T)$ is right-invariant, multiplying all
elements of $G*H$ on the right by $(y_r\cdots y_1)^{-1}$,
we may assume that $u_1$ and $u_2$ are in $G$. We claim that $V_1\cap G, \dots, V_m\cap G$ are the branch sets of an $M$-minor in $\cay(G,S)$.

The assumption that $M$ does not have cut-vertices implies that for each $i$, $3\leq i \leq m$, there
is a vertex $u_i \in V_i$ for which there are two disjoint paths, one
that is a $u_1u_i$-path and one that is a $u_2u_i$-path (this follows from~\cite[Corollary 3.3.3]{Die00} with $a=v_i$ and
$B=\{v_1,v_2\}$, where $\{v_i\}$ consists of vertices of $M$ corresponding
to the branch sets $V_1,\dots,V_m$). On the other hand, Observation~\ref{O:PthsFrPr} implies that two paths from different vertices of
$G$ to a vertex not in $G$ cannot be disjoint. Therefore,
$V_i\cap G\ne\emptyset$, for every $i$. Next, notice that for any two elements in $u,v\in V_i\cap G$ there is a $uv$-path $P$ in $\cay(G*H,S\cup T)$, all
of whose vertices are contained in $V_i$. Since a path does not
pass through any elements repeatedly, $P$ cannot leave $G$. Thus, the sets $V_i\cap G$ are connected in $\cay(G,S)$ for each $i$.

To complete the proof, we show that if there is an edge
in $M$ joining the vertices corresponding to $V_i$ and $V_j$, then
there is an edge in $\cay(G,S)$ joining $V_i\cap G$ and $V_j\cap
G$. Since $V_1,\dots,V_m$ are branch sets of an $M$-minor, there
is an edge joining $V_i$ and $V_j$. This edge must be in $\cay(G,S)$, because otherwise we can find a path in $\cay(G*H,S\cup T)$ that leaves $G$ through
one element of $G$ and returns through another element of
$G$, which is clearly impossible.
\end{proof}

\subsection{Groups with one end
%have generating sets for which Cayley graphs are not minor-ex\-cluded
}\label{S:OneEnd}

Let $\Gamma$ be a connected, locally finite graph, and denote by
$B(n,O)$ the ball of radius $n$ in $\Gamma$ centered at some fixed
vertex $O$ in $\Gamma$. The number of {\it ends} in $\Gamma$ is the
limit of the number of unbounded connected components in
$\Gamma\smallsetminus B(n,O)$ as $n\to\infty$. This limit exists either as a nonnegative integer or as $\infty$. (See \cite[pp.~144--148]{BH99} or \cite[Sections
11.4--11.6]{Mei08} for an introduction to the theory of ends.) A basic
result is that the number of
ends of a Cayley graph of a finitely generated group does not
depend on the choice of generating set (see \cite[Theorem 11.23]{Mei08}).

The following theorem is the main result of this section.

\begin{theorem}\label{T:OneEndInfInd}
Let $G$ be a finitely generated group with one end. Then there is
a finite set of generators $S$ of $G$ such that $\cay(G,S)$ is not
minor excluded.
\end{theorem}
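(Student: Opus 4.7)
The plan is to take any finite symmetric generating set $S_0$ of $G$ and enlarge it by adjoining a single element $t\in G$ (together with $t^{-1}$) of sufficiently large $S_0$-word length, forming $S = S_0 \cup \{t, t^{-1}\}$, and then to exhibit a $K_m$-minor in $\cay(G,S)$ for every $m\in\mathbb{N}$. The motivating analogy is Example \ref{E:NotMEinZ2}, where adjoining $(2,0)$ to the standard generating set of $\mathbb{Z}^2$ produced a non-minor-excluded Cayley graph; the present proof should be viewed as carrying out the same philosophy for an arbitrary one-ended group, with the new generator serving as a long shortcut edge throughout the group.

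The driving mechanism is a routing reformulation of one-endedness: for every finite $F\subset G$, the graph $\cay(G,S_0)\setminus F$ has a unique infinite connected component, so any two vertices of that component can be joined by a path in $\cay(G,S_0)$ avoiding $F$. Since $\cay(G,S_0)$ is a spanning subgraph of $\cay(G,S)$, such paths remain valid in $\cay(G,S)$. With this in hand, for each $m$ I would fix $\binom{m}{2}$ translates $e_{ij}=\{x_{ij},t x_{ij}\}$ of the edge $\{1,t\}$, one for each pair $i<j$ in $\{1,\dots,m\}$, placed mutually far apart in the $S_0$-metric. The endpoint $x_{ij}$ is designated to lie in the eventual branch set $V_i$ and $tx_{ij}$ in $V_j$, so that each pair $V_i,V_j$ will automatically share an edge (namely $e_{ij}$) in $\cay(G,S)$. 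The branch sets $V_1,\dots,V_m$ are then constructed inductively: given $V_1,\dots,V_{i-1}$ as finite connected subgraphs, one-endedness applied to the finite set $\bigcup_{k<i}V_k$ produces paths in $\cay(G,S_0)$ linking the designated endpoints $\{x_{ij}:j>i\}\cup\{tx_{ji}:j<i\}$ of $V_i$ while avoiding everything previously laid down; the union of these paths is declared to be $V_i$.

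The main obstacle is guaranteeing that at each stage of the inductive construction the designated endpoints of the next branch set still lie in the unique infinite component of $\cay(G,S_0)$ after the previously constructed branch sets are deleted. This is handled by choosing the edges $e_{ij}$ in advance by an inductive procedure that places each new translate beyond the radius already occupied by the previously chosen edges and any paths that will be needed to interconnect them. One-endedness supplies sufficient room at every scale, so the overall construction produces pairwise disjoint connected branch sets $V_1,\dots,V_m$, any two of which share a $t$-edge in $\cay(G,S)$; this is precisely a $K_m$-minor, and since $m$ was arbitrary, $\cay(G,S)$ is not minor excluded.
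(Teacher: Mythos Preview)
Your approach is genuinely different from the paper's, but it has a real gap at exactly the point you flag as ``the main obstacle.''

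When you build $V_i$ as a union of $S_0$-paths through its designated endpoints, nothing prevents $V_i$ from surrounding a designated endpoint of some later $V_j$. Concretely, $tx_{ik}$ lies at $S_0$-distance $|t|_{S_0}$ from $x_{ik}\in V_i$, and the path you build through $x_{ik}$ can enclose $tx_{ik}$, dropping it into a finite component of $\cay(G,S_0)\setminus V_i$. Your proposed fix---place each $e_{ij}$ ``beyond the radius already occupied by the previously chosen edges and any paths that will be needed to interconnect them''---is circular: the interconnecting paths are determined only \emph{after} the edges are placed, and however far apart the different $e_{ij}$ are from one another, the two ends of each individual $t$-edge are at the fixed distance $|t|_{S_0}$. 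In $\mathbb{Z}^2$ with the standard $S_0$, a simple $S_0$-path can surround any prescribed vertex, so this is not a hypothetical worry. A second, related gap: the condition ``sufficiently large $|t|_{S_0}$'' is never cashed out. For $\mathbb{Z}^2$ with standard $S_0$ and $t=(1,1)$ the resulting Cayley graph is the planar triangular lattice, hence minor excluded; so not every $t$ works, and you give no criterion singling out one that does.

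For comparison, the paper sidesteps the trapping problem by working with $m$ disjoint \emph{infinite rays} rather than finite paths: a ray cannot be trapped inside a finite component by removing a finite set. Producing the rays is the substantive work---done via Menger's theorem when no small cut separates $1$ from infinity, and via an element of infinite order (extracted by a combinatorics-on-words argument, Lemma~\ref{L:CutsInfOrd}) otherwise. Pairwise connections are then made in $\cay(G,S_0\cup S_0^2\cup S_0^3)$ by local modifications of the rays, with each new connection made outside a ball containing all previous work; one-endedness guarantees this is possible. Your routing-through-the-infinite-component idea is exactly the right intuition for that last step, but it does not by itself supply the disjoint infinite structure needed to keep future endpoints accessible.
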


\begin{lemma}\label{L:SufInfPaths} Assume that a one-ended group
$G$ with finite generating set $S_0$ has the property that for
each $m\in\mathbb{N}$ the Cayley graph $\cay(G,S_0)$ contains a
collection of $m$ disjoint infinite rays. Then
$G$ contains a finite generating set $S$ such that $\cay(G,S)$ is
not minor excluded.\end{lemma}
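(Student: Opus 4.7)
The plan is to augment the generating set $S_0$ by a finite symmetric set $T$ of new generators, set $S = S_0 \cup T$, and construct, for every $m \in \mathbb{N}$, a $K_m$-minor in $\cay(G,S)$. The main ingredients are the $m$ pairwise disjoint rays supplied by the hypothesis at every scale, the one-endedness of $G$ (which forces all rays to head into a single unbounded component at infinity), and the translation symmetry of Cayley graphs (which is the mechanism by which a finite $T$ can work uniformly for all $m$).

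The first step is to use one-endedness to locate ``shortcut'' and ``skip'' group elements. For any finite ball $B \subset \cay(G,S_0)$, the tails of any finite disjoint family of rays all lie in the unique unbounded component $U$ of $\cay(G,S_0)\smallsetminus B$, so any two such tails can be joined by a finite path inside $U$. I would choose the elements of $T$ as endpoint-products of finitely many such paths, together with some ``skip'' elements analogous to the $(\pm 2, 0)$-edges of Example~\ref{E:NotMEinZ2}, which allow traversing a ray while bypassing nearby vertices. By homogeneity of the Cayley graph, each $g \in T$ produces a $g$-edge at every vertex of $G$, so a single added generator yields an entire translated family of shortcut and skip edges throughout $\cay(G,S)$.

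Given $m$, the next step is to apply the hypothesis to obtain $m$ pairwise disjoint rays $R_1,\dots,R_m$, chosen to lie in a region of $\cay(G,S)$ where the translated shortcut edges from $T$ are available. The branch sets $V_1,\dots,V_m$ of the $K_m$-minor would be laid out in the staircase pattern of Example~\ref{E:NotMEinZ2}: each $V_i$ would consist of a finite piece of $R_i$ together with a connector visiting the other rays through shortcut edges, with the skip edges used to thread each $V_i$ past the competing vertices of other $V_j$'s without collision. Each required edge of $K_m$ between $V_i$ and $V_j$ would then be realized by one of the translated shortcut edges.

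The principal obstacle is the uniformity of $T$: a single finite generating set must handle every $m$. This is precisely where the translation symmetry of the Cayley graph enters---the basic shortcut-and-skip pattern built at one place is reproduced at every translate, so for ever larger $m$ one can choose the rays in an ever larger but still sufficiently ``$T$-covered'' region. A secondary technicality is disjointness of the connecting paths and branch sets, but this is manageable because the hypothesis supplies arbitrarily many disjoint rays, giving enough room to isolate each branch set from the others.
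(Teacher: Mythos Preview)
Your proposal has a genuine gap at exactly the point you flag as the ``principal obstacle'': the uniformity of $T$. You propose to include in $T$ the ``endpoint-products of finitely many such paths'' joining tails of rays, and then rely on translation symmetry of the Cayley graph to make these shortcuts available everywhere. But translation symmetry moves the shortcut edges, not the rays. The hypothesis only guarantees, for each $m$, \emph{some} collection of $m$ disjoint rays; there is no reason the rays for $m=100$ bear any translational relationship to the rays for $m=3$, nor that the rays within a single collection are translates of one another. A generator $g\in T$ that was a shortcut between two particular ray-tails gives you a $g$-edge at every vertex, but at a generic vertex that edge does not land on any ray at all. So your finitely many shortcut generators, chosen once and for all, will simply fail to connect the rays arising for large $m$. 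The staircase pattern of Example~\ref{E:NotMEinZ2} works in $\mathbb{Z}^2$ precisely because the rays there are axis-parallel translates of one another; no such structure is available here.

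The paper's proof avoids this trap by \emph{not} encoding the inter-ray connections as generators. It takes $S=S_0\cup S_0S_0\cup S_0S_0S_0$, which depends only on $S_0$ and not on any rays. For a given $m$ it finds a connecting path between two rays using only the connectivity of $\cay(G,S_0)$; when this path crosses a third ray $V_k$, the length-$2$ and length-$3$ generators allow the path and the ray to be rerouted past one another \emph{inside} $V_k$, because consecutive vertices of any $S_0$-ray are at $S_0$-distance~$1$ and hence can be skipped using $S_0S_0$ or $S_0S_0S_0$ edges. This skip mechanism is uniform over all rays and all $m$ precisely because it exploits only the local $S_0$-structure of a ray, not any global position. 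One-endedness is then used iteratively: after handling the pair $(i,j)$, one moves outside a large ball so that subsequent connections do not interfere with earlier ones. Your ``skip'' idea is the right half of the picture; the missing half is to realize that the ``shortcuts'' should be paths in the existing graph, not new generators.
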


\begin{proof}
Let $\{V_i\}_{i=1}^m$ be disjoint rays in $\cay(G,S_0)$.
The sets $V_i$, after some small modifications, will become branch
sets of a $K_m$-minor in $\cay(G,S)$, where $S=S_0\cup S_0S_0\cup
S_0S_0S_0$. (Here, $S_0S_0$ and $S_0S_0S_0$ are the sets of
products of all pairs and triples, respectively, of elements in
$S_0$.) That is, we will modify $\{V_i\}$ in such a way that the
modified sets are still disjoint, each modified $V_i$ is connected
in $\cay(G,S)$, and for each $i\ne j$ there is an edge in
$\cay(G,S)$ joining a vertex of the modified $V_i$ with a vertex
of the modified $V_j$. This will be achieved in $\frac{m(m-1)}2$
steps. Intuitively speaking, each step will create the desired
``connection" between $V_i$ and $V_j$ for the pair $(i,j)$, $i\ne
j$.

Start with the pair $(1,2)$. Since the graph $\cay(G,S)$ is connected, there is a path $P$ connecting some vertex
of $V_1$ to some vertex of $V_2$. Suppose $P$ intersects another
$V_k$. We describe a procedure for ``removing" this intersection.
Write $V_k=\{y_{(k,1)},y_{(k,2)},\dots\}$, where the indexing
starts with the ray's origin and moves out towards ``infinity".
The intersection of $P$ and $V_k$ has a first vertex and a last
vertex (with respect to the indexing of the elements of $V_k$). Denote these
vertices $y_{(k,s)}$ and $y_{(k,t)}$ $(s\le t)$, respectively.

If $s=t$, that is, if $P$ intersects $V_k$ in just one
vertex, then replace $V_k$ with $V_k'=V_k\smallsetminus
\{y_{(k,s)} \}$ and leave $P$ unchanged. If $t=s+1$, replace $V_k$
with $V_k'=V_k\smallsetminus \{y_{(k,s)}, y_{(k,t)} \}$ and leave
$P$ unchanged. Then, in both cases, $V_k'$ is disjoint from $V_1$
and $V_2$, $V_k'$ is connected in $\cay(G,S)$ (since $S_0S_0$ and
$S_0S_0S_0$ are in $S$), and $V_k'$ does not intersect $P$.

If $s+1< t$, then we modify both $P$ and $V_k$, as follows. Clearly we can find two paths, $L_1$ and $L_2$, using elements of $S_0S_0$ and $S_0S_0S_0$ such that $L_1$ is a path from $y_{(k,s)}$ to $y_{(k,t)}$, $L_2$ is a path from $y_{(k,s+1)}$ to $y_{(k,t+1)}$, $L_1$ and $L_2$ belong to $V_k$, and $L_1$ is disjoint from $L_2$. Now we replace the piece of $P$ that connects $y_{(k,s)}$ to $y_{(k,t)}$ with $L_1$ to obtain a new path $P'$, and we create $V_k'$ from $V_k$ by removing all the vertices in $V_k$ from $y_{(k,s)}$ to $y_{(k,t)}$ and adding the vertices of $L_2$, i.e., $V_k'=\big(V_k\smallsetminus \{ y_{(k,i)}\,:\, s\leq i \leq t \}\big) \cup L_2$. Then, $V_k'$ is disjoint from $V_1$ and $V_2$, $V_k'$ is connected in $\cay(G,S)$, and $V_k'$ does not intersect $P'$.

The procedure used for ``removing" the intersection of $P$ with $V_k$ did not introduce any new intersections, although it is possible that some
other intersections disappeared in the process. (Since the sets $\{V_i\}$ are disjoint, $P$ cannot intersect more than one $V_k$ in the same place.)
Thus, the procedure can be repeated to ``remove" each intersection one at a time until we obtain a path from $V_1$ to $V_2$ that does not intersect any of the modified $V_i$'s.

We continue to use the method described above for constructing the desired connection between $V_1$ and $V_2$ until we establish
``connections" for every pair $(i,j)$, $i \ne j$. Specifically,
once we have established ``connections'' between some of the $V_i$ and would then like to arrange a new connection,
we remove a ball, $B$, centered at
$1$ from $\cay(G,S)$, whose radius is large enough to contain all vertices
that were used for previous paths and all pieces of $V_k$'s that
were involved in the previous modifications.
Since $G$ has one end, $\cay(G,S)\smallsetminus B$ has an
unbounded connected component, $\Theta$. Thus, we can use the method above on the infinite connected pieces of the rays $V_i$ in $\Theta$. The new modifications will not destroy
previous connections because they are made away from the previously constructed connections. After  ``connections" have been constructed for all pairs $(i,j)$, $i \ne j$,
the sets obtained from the final modification will be branch sets for
$K_m$ in $\cay(G,S)$.
\end{proof}

\begin{proof}[Proof of Theorem \ref{T:OneEndInfInd}] By Lemma
\ref{L:SufInfPaths}, it suffices to construct, for an arbitrary
$m\in\mathbb{N}$, a collection of $m$ disjoint rays in
$\cay(G,S_0)$, where $S_0\subset G$ is some finite generating set.
We construct such rays using Menger's theorem
\cite[Section 3.3]{Die00}, which states: if vertex sets $A$ and $B$ in a graph
cannot be separated by removing fewer than $k$ vertices, then
there are $k$ disjoint paths joining $A$ and $B$. In
\cite{Die00} this result is proved for finite graphs, but it also holds
for infinite locally finite graphs (and even in a more
general context; see \cite{Aha87} and \cite{Hal74}).

Begin by fixing a finite generating set $\widetilde S$ in
$G$. For each vertex $v$, let $\ell(v)$ denote the length of $v$, i.e., the distance from $v$ to the unit element 1 in $\cay(G,\widetilde S)$. Consider the following alternatives.
\begin{enumerate}
    \item[(i)] There is some $m\in\mathbb{N}$ such that for each
$R\in\mathbb{N}$ there is a set $C_R$, consisting of $m$ vertices of length at least $R$, such that the removal of $C_R$ from $\cay(G,\widetilde S)$ disconnects
$1$ from the infinite component.
    \item[(ii)] There is no such $m$.
\end{enumerate}

Consider case (ii). Let $m\in\mathbb{N}$ be given and choose $S_0=\widetilde S$. Then there is an
$R\in\mathbb{Z}$ such that the removal of $m$ vertices of length at least $R$ in
$\cay(G,S_0)$ cannot disconnect $1$ and ``infinity''.
%Therefore, we can use Menger's theorem to construct a disjoint collection of $m$ rays in $\cay(G,S_0)$.
Let $L$ be a natural number bigger than $R$.
Use Menger's theorem on a one-element set $\{w_L\}$ with
$\ell(w_L)=L>R$ and an $m$-element subset $A$ of vertices from
$\{v:~\ell(v)=R\}$. Such a subset exists since, by assumption,
$\#\{v:~\ell(v)=R\}> m$ (otherwise this set would be an $m$-element
set disconnecting $1$ and ``infinity''). This yields a sequence
$\{(P_1^L,\dots,P_m^L)\}_{L=R+1}^\infty$ of $m$-tuples of disjoint
paths joining $A$ and $w_L$. Since $\cay(G,S_0)$ is
locally finite, we can find a convergent subsequence in
$\{P_1^L\}_{L=R+1}^\infty$. Let $I_1$ be the corresponding set of
indices and $P_1$ be the limiting path. Consider the sequence
$\{P_2^L\}_{L\in I_1}$. It contains a convergent subsequence. It
is easy to check that the limit $P_2$ of this subsequence is
disjoint from $P_1$. Continue in the obvious way to obtain $m$ disjoint rays in $\cay(G,S_0)$.

Now consider case (i). In this case, by Lemma~\ref{L:CutsInfOrd} below, the group $G$ contains an element of infinite order. The subgroup
$H$ generated by an element $v$ of infinite order must have infinite index, since $G$ has one end and a  group with an infinite cyclic subgroup of finite index has two ends (see~\cite[Corollary 11.34]{Mei08}).
Thus, choosing $S_0=\widetilde
S\cup\{v\}$ yields an infinite collection of
disjoint rays in $\cay(G,S_0)$, $\big\{\{vg_i,v^2g_i,v^3g_i,\dots\}\,:\,i\in\mathbb{N}\big\}$, where
$\{g_i\,:\,i\in\mathbb{N}\}$ are right coset representatives of $G/H$.
\end{proof}

\begin{lemma}\label{L:CutsInfOrd} Let $G$ be a one-ended group
with finite generating set $\widetilde
S$. Assume that there exists an $m\in\mathbb{N}$ such that for each
$R\in\mathbb{N}$ there is a set $C_R$, consisting of $m$ vertices
of length at least $R$ in $\cay(G,\widetilde
S)$, such that the removal of
$C_R$ disconnects $1$ from ``infinity''. Then the group $G$
contains an element of infinite order.
\end{lemma}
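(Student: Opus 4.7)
The plan is to argue by contradiction. Suppose $G$ is a torsion group, i.e., every element of $G$ has finite order; I will derive a contradiction from the hypotheses on the cuts $C_R$ and the one-endedness of $G$.

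For each $R$, let $A_R$ denote the finite connected component of $\cay(G,\widetilde S)\smallsetminus C_R$ that contains $1$. A shortest path in $\cay(G,\widetilde S)$ from $1$ to any vertex of length at most $R-1$ avoids $C_R$, so $B(R-1,1)\subseteq A_R$ and in particular $|A_R|\to\infty$. Any vertex of $A_R$ that is adjacent in $\cay(G,\widetilde S)$ to a vertex outside $A_R$ must be adjacent to $C_R$; a routine count then yields $|sA_R\triangle A_R|\le 2m$ for every $s\in\widetilde S$, and by telescoping $|gA_R\triangle A_R|\le 2m|g|_{\widetilde S}$ for every $g\in G$.

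From this Folner-like property one extracts polynomial growth of $G$: whenever $|g|_{\widetilde S}<|A_R|/(4m)$ we have $|gA_R\cap A_R|>0$, so $g\in A_RA_R^{-1}$; hence $B(\lfloor|A_R|/(4m)\rfloor,1)\subseteq A_RA_R^{-1}$, and setting $r_R:=\lfloor|A_R|/(4m)\rfloor$ one obtains $|B(r_R,1)|\le|A_R|^2=O(r_R^2)$ with $r_R\to\infty$. By selecting, for each $R$, a cut $C_R$ that minimizes $|A_R|$, one arranges that $\{r_R\}$ is sufficiently dense in $\mathbb{N}$, and monotonicity of $r\mapsto|B(r,1)|$ then gives $|B(r,1)|=O(r^2)$ for every radius $r$. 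By Gromov's polynomial growth theorem, $G$ is virtually nilpotent; being also torsion and finitely generated, any finite-index nilpotent subgroup is polycyclic and torsion, hence finite, and so $G$ itself is finite. This contradicts the one-endedness of $G$ (which forces $G$ to be infinite), completing the proof.

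The main obstacle I foresee is the passage from the polynomial bound $|B(r_R,1)|=O(r_R^2)$, valid only along the specific (possibly sparse) sequence $\{r_R\}$, to a polynomial bound for all radii $r$. If the values $|A_R|$ jump unboundedly between consecutive $R$, the sequence $\{r_R\}$ may be too thin for a mere monotonicity argument. Closing this gap cleanly requires either exploiting the freedom in the cut selection (keeping $|A_R|$ as close to $|B(R-1,1)|$ as possible) or invoking an isoperimetric inequality of Coulhon--Saloff-Coste type, which implies that a group of super-polynomial growth cannot admit arbitrarily large finite subsets of uniformly bounded boundary size.
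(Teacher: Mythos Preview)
Your approach is sound in outline and genuinely different from the paper's. The paper argues constructively: fix a lexicographic order on words over $\widetilde S$ and call the first shortest representative of each $g\in G$ its \emph{distinguished} word. A compactness argument produces a geodesic ray $P$ from $1$ whose every finite initial segment is distinguished. Translating $P$ by any of its vertices yields another such ray, which must hit the cut $C_m$; consequently every length-$m$ subword of $P$ coincides with the initial $m$-segment of the distinguished word of some element of $C_m$, so $P$ has at most $m$ distinct factors of length $m$. A classical word-combinatorics fact (essentially Morse--Hedlund; the paper invokes \cite[Theorem~3.3]{Man12}) then forces $P$ to be eventually periodic, and the period $z$ satisfies $d_G(z^n,1)\to\infty$, hence has infinite order. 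This is elementary and exhibits the element explicitly.

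Your route---building finite sets $A_R$ with vertex boundary of size at most $m$ and $|A_R|\to\infty$, then using structure theory---is conceptually clean, and your derivation of $|sA_R\triangle A_R|\le 2m$ and $B(r_R,1)\subseteq A_RA_R^{-1}$ with $r_R\asymp|A_R|$ is correct. The gap you identify, however, is genuine and is \emph{not} closed by the fixes you propose: minimizing $|A_R|$ over admissible cuts gives no a priori control on the ratio $|A_{R+1}|/|A_R|$, and the Coulhon--Saloff-Coste inequality again only yields a growth bound along the sequence $|A_R|$, not for all radii. The correct repair is to invoke the van den Dries--Wilkie strengthening of Gromov's theorem (J.~Algebra~\textbf{89} (1984), 349--374), which asserts that $|B(r_n,1)|\le Cr_n^{\,d}$ along \emph{some} unbounded sequence $r_n$ already forces virtual nilpotency; with this in place your contradiction (torsion $+$ virtually nilpotent $+$ finitely generated $\Rightarrow$ finite) goes through. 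So your argument is salvageable, but at the cost of a much heavier external input than the paper uses.
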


\begin{proof} Notice that the order of an element $z$ in $G$ for which the sequence $\{d_G(z^n,1)\}_{n=1}^\infty$ is unbounded must be infinite. We use some ideas from the proof of the result on groups of linear growth in \cite{Jus71,WV84} (see also \cite{IS87,Man12}) to produce such a $z$.

Assume that $\widetilde
S$ is symmetric. Fix an order on the elements of
$\widetilde S$ and then order all words of the same length in $\widetilde S$
using a lexicographic order. For each element $g$ of $G$, we call the first word in this order
among the shortest words representing $g$
a {\it distinguished word}. It is easy to see that
a subword of a distinguished word is distinguished.

Let $w_1,\dots,w_m$ be the distinguished words representing
elements of the set $C_R$ for $R=m$. Let $u_1,\dots,u_m$ be the
starting pieces of length $m$ of $w_1,\dots,w_m$.
%Let $M$ be the length of $w_m$.
By taking a pointwise limit of
distinguished words with indefinitely increasing lengths,
we can find a geodesic ray $P$
starting at $1$ in $G$ such that each finite piece of it is a distinguished word.
Because $1$ and ``infinity'' are disconnected in $\cay(G,\widetilde S) \smallsetminus C_R$, $P$ must pass through $C_R$. Furthermore, thinking of $P$ as an infinitely long  distinguished word, each subword of $P$ of length $m$ is one of the words $u_1,\dots,u_m$.
%To see this, consider the remainder of
% this ray that starts with the subword $w$ and translate it to~$1$.
%``Infinity" is separated from $1$ by the endpoints of
%$w_1,\dots,w_m$. Therefore, the infinite distinguished word starts
%with one of them and the beginning of it is the corresponding
%word in the collection $\{u_1,\dots,u_m\}$.
Thus, $P$ has at most $m$ distinct subwords of length $m$. Therefore, by \cite[Theorem 3.3]{Man12}, each subword $w$ of $P$ of length at least $2m$ is of the form $w=utv$, where $t$ is $p$-periodic (i.e., $t=z^kr$, where $k$ is some exponent, $z$ is a word of length $p$ and $r$ is a word of length at most $p$) for some $0<p\le m$, and $u$ and $v$ have lengths at most $m-p$. Apply this result to each of the initial subwords $w$ of $P$ of length at least $2m$. It is clear that, when written in the form $w=utv=uz^krv$, infinitely many of the $w$ will have the same $z$.
%(which depends on the given subword $w$)
Since the lengths of all of the $u$'s, $v$'s, and $r$'s are bounded above by $m$, it follows that the sequence $\{d_G(z^n,1)\}_{n=1}^\infty$ is unbounded.
\end{proof}

\begin{remark}
Recall that an infinite finitely generated group has one, two, or infinitely
many ends (see~\cite[Theorem 8.32, p.~146]{BH99}). Furthermore, a group with two ends contain $\mathbb{Z}$ as a subgroup of finite index.
Hence, the question of whether or not a given group has a finite generating set for which the corresponding Cayley graph is not minor excluded is answered for one-ended groups and two-ended groups in Sections~\ref{S:OneEnd} and~\ref{S:VirtFree}, respectively.
The only case that remains open is the case of infinitely many ends.
\end{remark}

\section{Open problems}

The following two problems are, in our opinion, the most
intriguing open problems related to this paper.

\begin{problem} Let $G$ be a group of asymptotic dimension at least $3$.
Does it follow that $\cay(G,S)$ is not minor excluded for any
choice of generating set $S$?
\end{problem}

\begin{problem} Let $G$ be a group that is not virtually free.
Does it follow that $\cay(G,S)$ is not minor excluded for some
choice of generating set $S$?
\end{problem}

%\end{large}

\begin{small}

\end{small}

\end{document}